\newtheorem{Thm}{Theorem}[section]
\theoremstyle{definition}
\newcommand{\ds}{\displaystyle}
\newcommand{\R}{\mathbb{R}}
\newcommand{\N}{\mathbb{N}}
\newcommand{\ts}{\thinspace}
\newcommand{\clb}{\color{blue}}
\newcommand{\norm}[1]{\left \lVert #1 \right \rVert}
\newcommand{\be}{\begin{equation}}
\newcommand{\ee}{\end{equation}}
\newcommand{\bes}{\begin{equation*}}
\newcommand{\ees}{\end{equation*}}
\newcommand{\mc}{\mathcal}
\newcommand{\ve}{\varepsilon}
\newcommand{\la}{\langle}
\newcommand{\ra}{\rangle}
\newcommand{\lt}{\left(}
\newcommand{\rt}{\right)}
\newcommand{\lam}{\lambda}
\newcommand{\ben}{\begin{enumerate}}
\newcommand{\een}{\end{enumerate}}
\newcommand{\bpf}{\begin{proof}}
\newcommand{\epf}{\end{proof}}
\newcommand{\bsi}{\begin{itemize}[label={{\clb\ding{51}}}]}
\newcommand{\ei}{\end{itemize}}
\newcommand{\bci}{\begin{itemize}[label={{\clb\ding{43}}}]}
\newcommand{\ora}{\overrightarrow}
\newcommand{\spn}{\text{span}}
\newcommand{\im}{\text{Im}}
\begin{document}
%TopMatter
%%%%%%%
\title[Sturm-Liouville BVP's]{Weakly Nonlocal Boundary Value Problems with Application to Geology}
\author[D. Maroncelli and E. Collins]{ Daniel Maroncelli$^*$ and Emma Collins}
\address{Department of Mathematics, College of Charleston, Charleston, SC 29424-0001, U.S.A\\ \email{maroncellidm@cofc.edu}}
\address{Robinson Design Engineers, 10 Daniel St. Charleston, SC 29407, U.S.A\\ \email{ec@robinsondesignengineers.com}}
\CorrespondingAuthor{D. Maroncelli}
\keywords{Geology, Eigenvalue, Nonlocal, implicit function theorem, Sturm-Liouville}
\subjclass{34B10, 34B15, 34B09}
%Abstract%
%%%%%%
\begin{abstract} 
In many cases, groundwater flow in an unconfined aquifer can be simplified to a one-dimensional Sturm-Liouville model of the form:
\begin{equation*}
x''(t)+\lambda x(t)=h(t)+\ve f(x(t)),\hspace{.1in}t\in(0,\pi)
\end{equation*}
subject to non-local boundary conditions
\begin{equation*}
x(0)=h_1+\ve\eta_1(x)\text{ and } x(\pi)=h_2+\ve\eta_2(x).
\end{equation*}
In this paper, we study the existence of solutions to the above Sturm-Liouville problem  under the assumption that $\ve$ is a small parameter.  Our method will be analytical, utilizing the implicit function theorem and its generalizations.
\end{abstract}

\maketitle
**\textbf{ This work has been accepted for publication in the journal Differential Equations \& Applications.}

%%%%%%%%%%%%%%%%%%%%%%%%%
%%%%%%%%%Introduction%%%%%%%%%%
%%%%%%%%%%%%%%%%%%%%%%%%
%%%%%%%%%%%%%%%%%%%%%%%%%
\section{Introduction}
\setlength{\parskip}{.15cm plus 0.05cm minus 0.02cm}
\allowdisplaybreaks

Many groundwater systems in the coastal regions of the southeastern United States are characterized as shallow, unconfined aquifers. Understanding the flow of groundwater in such systems is integral to flood hazard management and contaminant remediation.
It is well known that groundwater flow through a porous material can be modeled by the diffusion equation,
\be \label{E:G}\frac{\partial}{\partial x}\big{(}K_{xx}\frac{\partial h}{\partial x}\big{)} + \frac{\partial}{\partial y}\big{(}K_{yy}\frac{\partial h}{\partial y}\big{)}+ \frac{\partial}{\partial z}\big{(}K_{zz}\frac{\partial h}{\partial z}\big{)} = S_s\frac{\partial h}{\partial t} -W;\ee
see \cite{Mcdonald1988,rushton1979seepage} for a thorough treatment of these ideas.
 
In the above equation, $x,y,z$ are spatial variables, $t$ represents time, $S_s$ denotes specific storage of the material (a constant), $h$ is the hydraulic head as a function of $x,y,z,$ and $t$, $W$ is the recharge or discharge of water into or out of the system as a function of $x,y,z,t,$ and potentially $h$, and $K$ is hydraulic conductivity which may be a function of $x,y,z,$ and $t$. Hydraulic conductivity is often assumed to be constant in time. Moreover, if one stratigraphic layer is assumed to have the same hydraulic conductivity in all spatial directions (i.e. $K_{xx} = K_{yy} = K_{zz}$), then the system is said to be isotropic. 

If we make the common assumption that the groundwater system is isotropic and additionally that hydraulic conductivity is constant, then \eqref{E:G} simplifies to 
\be K\big{(}\frac{\partial^2h}{\partial x^2} + \frac{\partial^2h}{\partial y^2} + \frac{\partial^2h}{\partial z^2}\big{)} = S_s \frac{\partial h}{\partial t} -W\ee
\begin{center}or\end{center} 
\be\label{E:D} S_s \frac{\partial h}{\partial t}-K\Delta h= W,\ee
where $\Delta$ is the Laplacian operator and  $K$ is the hydraulic conductivity in all directions. In applications, boundary conditions representing various properties of the aquifer are assumed in addition to $\eqref{E:D}$.

It is very common in groundwater modeling to make the simplifying assumption that the groundwater recharge function $W$ is strictly a function of $x,y, z$, and possibly $t$. However, in many cases, groundwater flow has a complicated dependence on water table height which leads to nonlinear recharge/discharge and  nonlinear boundary conditions of the groundwater flow equation \eqref{E:D}. For example, it is well documented that evaporation of water from a shallow aquifer is heavily dependent on water table height, see \cite{raghunath2006hydrology,robinson2017hydrology}. In semi-arid and arid environments where evaporation can comprise up to 70\% of the water budget {\cite{narasimhan2008note}}, these nonlinear conditions cannot be ignored.  It is this nonlinear dependence that is the focus of this paper.

The existing literature on nonlinear boundary value problems is extensive.  For ideas closely related to the ideas of this paper, we suggest \cite{Bouch,Chen,Chow,DrabekRes,DrabekLL,DrabekPLap,Du,MaSturm,MaroncelliSturm,MaroncelliNonlinearImp,Maroncelli2013,Maroncelli2014,RodSuaNon,RodSuaSturm}.

%%%%%%%%%%%%%%%%%%%%%%%%%
%%%%%%%%%One-Dimensional Flow%%%%%%%%%%
%%%%%%%%%%%%%%%%%%%%%%%%
%%%%%%%%%%%%%%%%%%%%%%%%%
\section{Derivation of One-Dimensional Flow}
In situations of unidirectional groundwater flow with no seepage, flow can be described by the one dimensional flow equation 
\be\label{E:1Dflow}\frac{\partial u}{\partial t}  - \alpha \frac{\partial^2}{\partial x^2}u = \beta, \ee
see \cite{Kim} for the details,
with the constant $\alpha$ and the function $\beta$ determined by various aquifer properties and $u(t,x)$ representing water table height as a function of time $t$ and the spacial variable $x$. If we make the simplifying assumption that $u$ can be separated into a steady state and transient component, then we have
\be u(t, x) = v(x) + w(t, x)\ee
where $v$ is the steady state  component of $u$ and $w$ is the transient component of $u$. The steady state component can be thought of as persisting behaviors in the aquifer system such as flow due to aquifer slope, evaporation, etc., whereas the transient component can be thought of as groundwater behavior due to temporary influences such as flood, precipitation events, etc. The flow equation, \eqref{E:1Dflow},  now becomes
\bes
u_t - \alpha u_{xx} =w_t - \alpha v_{xx} - \alpha w_{xx} =\beta.
\ees
If we consider this simplified version of the model on the infinite strip $[0,\infty)\times [0,L]$, and impose nonlinear boundary and initial conditions, we arrive at
\bes 
\begin{split}
w_t - \alpha v_{xx} - \alpha w_{xx} &= \beta\\
u(t, 0) = v(0) + w(0, t) &= h_1(t)+\gamma_1\\
u(t, L) = v(L)+w(L, t) &= h_2(t)+\gamma_2\\
u(0, x) = v(x)+w(x, 0)&= h_0(x)
\end{split},
\ees
where we assume $h_1, h_2$ and $h_0$ are arbitrary real-valued continuous functions with $h_i:=\lim_{t\to\infty}h_i(t)$ existing for $i=1,2$ and $\gamma_1, \gamma_2$ are possibly nonlinear functionals on the space of continuous functions, $C([0,\infty)\times [0,L])$.
Note that constant-head boundary conditions, in which $h_1(\cdot)$ and $h_2(\cdot)$ are constants  and $\gamma_1=0=\gamma_2$, correspond to Dirichlet boundary conditions. Other commonly modeled boundary conditions are discussed in \cite{franke1987definition}. 

As described in \cite{Kim}, it is reasonable to assume the transient component $w$ and its derivatives vanish as $t\rightarrow \infty$ (as the system reaches equilibrium). Under this assumption, and appropriate convergence assumptions, we deduce, as $t\rightarrow \infty$, 
\bes-\alpha v_{xx} = \beta\ees
and
\bes
v(0)=h_1+\gamma_1 \text{ and } v(L)=h_2+\gamma_2.
\ees
Since we are considering the one-dimensional case, this equates to
\be \label{E:1D_sturm} v_{xx} =  v''(x) = -\beta/\alpha \ee
with boundary conditions 
\be\label{E:1D_bound} 
v(0)=h_1+\gamma_1 \text{ and } v(L)=h_2+\gamma_2.
\ee

In general, $\beta$ is a function of water table height, $u$, but a common practice in groundwater modeling is to assume $\alpha$ and $\beta$ are constant and that $\gamma_1=0=\gamma_2$.  This is often referred to as ``constant-head'' boundary conditions. Under this assumption we are lead to the well known result in hydrology (see  \cite{Fetter}),
\bes
v(x)=\frac{-\beta}{2\alpha}x^2+(\frac{h_2-h_1}{L}+\frac{\beta}{2\alpha}L)x+h_1.
\ees
Therefore, solving the groundwater flow equation \eqref{E:1Dflow} is reduced to solving the homogeneous linear PDE describing the transient component
\be w_t = \alpha w_{xx}\ee
which can be easily solved with well-known results from PDEs upon defining an initial condition, $h_0$. 
When $\beta$ is not assumed to be constant and/or $\gamma_1, \gamma_2$ are assumed to be nonzero, the situation is much more complicated. It is in this direction that we turn for the for the remainder of the paper.

%%%%%%%%%%%%%%%%%%%%%%%%%
%%%%%%%%%Weak Nonlinearities%%%%%%%%%%
%%%%%%%%%%%%%%%%%%%%%%%%
%%%%%%%%%%%%%%%%%%%%%%%%%
  \section{Weak Nonlinearities}

\subsection{Preliminaries}
Based on our derivations in previous section, we will now study one-dimensional groundwater flow in an unconfined aquifer via \eqref{E:1D_sturm}-\eqref{E:1D_bound}.  For many aquifer systems under natural conditions, it is reasonable to assume that $\beta$, representing groundwater recharge, has an ``almost''  linear relationship to groundwater height; that is, $\beta$ consists of a linear term and some ``small''  possibly nonlinear term. From \eqref{E:1D_sturm}, consider the case where  $\beta(x, v) = \lambda v-h-\epsilon f(x,v)$ for $\lambda, \ve\in \R$ and some fixed continuous functions $h:\R\to \R$ and $f:\R^2\to \R$.  In addition, assume that for $i=1,2$, $\gamma_i=\ve \eta_i$  where $\eta_1, \eta_2$ are smooth functionals. A concrete example of $\eta_1, \eta_2$ might be multi-point boundary operators such as
\bes
\eta_1(v)=\sum_{k=1}^n g_k(v(t_k)),
\ees
\bes
\eta_2(v)=\sum_{j=1}^m h_j(v(t_j)),
\ees
where each $g_k$, $h_j$ is a differentiable function and each $t_k$, $t_j\in [0,L]$.

 If we assume, without loss of generality, that $\alpha= 1$ and $L=\pi$, then  \eqref{E:1D_sturm}-\eqref{E:1D_bound} becomes
\be \label{Eq:sturm}
v''(x)+\lambda v(x)= h(x)+ \epsilon f(x,v(x)), \ts x\in (0,\pi),\ee 
subject to
\be\label{E:bc_1} v(0) = h_1+\epsilon\eta_1(v) \ee
\be \label{E:bc_2} v(\pi) =h_2+ \epsilon\eta_2(v).\ee
Equation \eqref{Eq:sturm} along with boundary conditions \eqref{E:bc_1} and \eqref{E:bc_2} is a special case of  what is often referred to as a regular Sturm-Liouville boundary value problem. 

We choose to study \eqref{Eq:sturm}-\eqref{E:bc_2} through the use of operators on Banach spaces.  Before getting to our main result, we introduce appropriate spaces and operators. We let $C:=C[0,\pi]$ denote the space of real-valued continuous functions topologized by the supremum norm, $\norm{\cdot}_\infty$. The space $L^2:=L^2[0,\pi]$ will denote, as usual, the space of real-valued square-integrable functions defined on $[0,\pi]$. The topology on $L^2$ will be that induced by the standard $L^2$-norm, $\norm{\cdot}_2$. We use  $H^2$ to denote the Sobelov space of functions with two weak derivatives in $L^2$;  that is,
\bes H^2=\{x\in L^2\mid x' \text{ is absolutely continuous and } x''\in L^2\}.\ees  Unless otherwise stated, the topology on $H^2$ will be the subspace topology inherited from $L^2$.
On occasion, we may also view $H^2$ as a subspace of $C$. Each scenario should be clear from the context of our discussion. Finally, we will use $|\cdot|$ to denote the Euclidean norm on $\R^2$, and $\langle\cdot, \cdot\rangle_2$ and $\langle\cdot, \cdot\rangle_\R$ will denote the inner products on  $L^2$ and $\R^2$, respectively.  

 For each $\lam\in \R$, we define a differential operator $\mc{A}_\lam:H^2\rightarrow L^2 $ by
\bes
\mc{A}_\lam v = v'' +\lambda v.
\ees
 For $i=1,2$, we define  boundary operators $B_i:H^2\to \R$ by
\bes B_1:H^2\rightarrow \R \text{ by }B_1v= v(0),\ees
and
\bes B_2:H^2\rightarrow \R\text{ by }B_2v= v(\pi).\ees
% \begin{align*}
% &B_1:C^2[0,\pi]\rightarrow \R \text{ by} && &B_2:C^2[0,\pi]\rightarrow \R\text{ by}\\
% &B_1u= u(0),  && &B_2u= u(\pi),
% \end{align*}
Finally, we let 
\begin{align*}
&\mc{L}_\lam:H^2\rightarrow L^2\times \R \times \R\text{ by }
\mc{L}v = \begin{bmatrix} 
\mc{A}_\lam v \\
B_1v\\
B_2v
\end{bmatrix}, \text{ and}\\
&\mc{F}:H^2\rightarrow L^2\times \R\times \R\text{ by }
\mc{F}(v) = \begin{bmatrix} 
f(\cdot, v) \\
\eta_1(v)\\
\eta_2(v) 
\end{bmatrix}.
\end{align*}
With this notation, solving \eqref{Eq:sturm} with boundary conditions \eqref{E:bc_1} and \eqref{E:bc_2}  is equivalent to solving $\mc{L}_\lam v = \ora{h}+\epsilon\mc{F}(v)$, where 
\bes\ora{h} =  \begin{bmatrix} 
h\\
h_1\\
h_2
\end{bmatrix}.
\ees

The study of the nonlinear boundary value problem \eqref{Eq:sturm}-\eqref{E:bc_2} will be intimately related to the linear nonhomogeneous boundary value problem 
\begin{equation}
v''(x)+\lambda v(x)=h(x),\label{E:NHSturm} \hspace{.2in} x\in (0,\pi)
\end{equation}
\begin{equation}
v(0)= h_1\text{ and }v(\pi)=h_2,\label{E:NHSBoundary}
\end{equation}
where $h$ is an arbitrary element of $L^2$ and $h_1$ and $h_2$ are elements of $\R$. 
Using our notation from above, we have that solving \eqref{E:NHSturm}-\eqref{E:NHSBoundary} is equivalent to solving 
\be\label{E:NonHom} \mc{L}_\lam v=\ora{h}=\begin{bmatrix}h\\h_1\\h_2\end{bmatrix}.\ee

The analysis of the boundary value problem \eqref{Eq:sturm}-\eqref{E:bc_2} follows two distinct routes, one in which $\lam$ is an eigenvalue of the operator $v\to -v''$ (subject to Dirichlet boundary conditions $v(0)=0=v(\pi)$) and one in which it is not.   The difficulty in the analysis lies mostly in the case where $\lam$ is an eigenvalue; this is, $\lam=n^2$ for some natural number $n$.  When $\lam$ is an eigenvalue, this case is often referred to as the case of resonance. It is the case of resonance that we are most interested in, however, for completeness, we we include, in what follows, an analysis of \eqref{Eq:sturm}-\eqref{E:bc_2} for the case  in which $\lam$ is not an eigenvalue.

%%%%%%%%%%%%%%%%%%%
%%%%%%%%%%%%%%%%%%
%%Invertible L%%%%%%%
\subsection{Invertible $\mc{L}_\lam$}

It is well-known that when $\lam\neq n^2$ for some natural number $n$, that is, when $\lam$ is not an eigenvalue of the mapping  $v\to -v''$ (subject to Dirichlet boundary conditions $v(0)=0=v(\pi)$), $\mc{L}_\lam$ is an invertible operator.  Under this assumption, the existence of solutions to \eqref{Eq:sturm}-\eqref{E:bc_2} follows easily under very mild assumptions on $f$ (see \eqref{Eq:sturm} for notation). We present two such cases.

%\be G(\epsilon, u) = \mc{L}u - \ora{f} - \epsilon\mc{F}(\cdot, u).\ee

\begin{Thm}
Suppose that $f$ is continuously differentiable with respect to its second component, that for each $i=1, 2$, $\eta_i$ is continuously differentiable (relative to the supremum norm), and $\lam\not\in\N^2$. Then there exists a $\delta> 0$ such that if $-\delta<\ve <\delta$, the boundary value problem \eqref{Eq:sturm}-\eqref{E:bc_2} has a unique solution $v_\ve$.  Moreover, the mapping $\ve\to v_\ve$ is continuously differentiable.

\end{Thm}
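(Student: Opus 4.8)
The plan is to recast \eqref{Eq:sturm}--\eqref{E:bc_2} as a root-finding problem for a smooth map and invoke the implicit function theorem, which will deliver existence, local uniqueness, and the $C^1$ dependence on $\ve$ all at once. First I would fix the appropriate topology on $H^2$: instead of the $L^2$-subspace topology, I would work with the Hilbert-space norm $\norm{v}_{H^2}^2=\norm{v}_2^2+\norm{v'}_2^2+\norm{v''}_2^2$, under which $H^2$ is complete and the one-dimensional Sobolev embedding $H^2\hookrightarrow C$ is bounded (indeed compact). This makes $\mc{L}_\lam$ a bounded linear operator and the evaluation functionals $B_1,B_2$ continuous. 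I then define
\be
G:H^2\times\R\to L^2\times\R\times\R,\qquad G(v,\ve)=\mc{L}_\lam v-\ora{h}-\ve\mc{F}(v),
\ee
so that a pair $(v,\ve)$ solves the boundary value problem exactly when $G(v,\ve)=0$.

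Next I would verify that $G$ is continuously Fr\'echet differentiable. The map $(v,\ve)\mapsto\mc{L}_\lam v-\ora{h}$ is affine and bounded, so all the content lies in showing $\mc{F}$ is $C^1$. The functionals $\eta_1,\eta_2$ are assumed $C^1$ with respect to the supremum norm, so precomposing with the bounded embedding $H^2\hookrightarrow C$ makes them $C^1$ on $H^2$. The delicate component is the Nemytskii (superposition) operator $v\mapsto f(\cdot,v)$. Because superposition operators are generically nondifferentiable between $L^p$-spaces, the main obstacle is precisely this differentiability; the rescue is to exploit that $H^2$ embeds continuously into $C$, so that working in the supremum norm supplies the uniform control on compact ranges needed to show, from the continuity of $f$ and $f_v$ in the second slot, that $v\mapsto f(\cdot,v)$ is $C^1$ from $H^2$ into $L^2$ with derivative $w\mapsto f_v(\cdot,v)\,w$. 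Everything else in this step is routine bookkeeping.

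Finally, set $v_0=\mc{L}_\lam^{-1}\ora{h}$, the unique solution of the linear problem \eqref{E:NonHom}, so that $G(v_0,0)=0$. The partial Fr\'echet derivative in $v$ is
\be
D_vG(v_0,0)=\mc{L}_\lam,
\ee
which is invertible precisely because $\lam\notin\N^2$, as recorded in the discussion preceding the statement. The implicit function theorem then produces a $\delta>0$ and a $C^1$ curve $\ve\mapsto v_\ve$ on $(-\delta,\delta)$, passing through $v_0$ at $\ve=0$, such that $v_\ve$ is the unique zero of $G(\cdot,\ve)$ in a neighborhood of $v_0$. This is exactly the asserted unique solution $v_\ve$, together with its continuous differentiability in $\ve$.
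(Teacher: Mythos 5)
Your proposal is correct, and it is the same basic strategy as the paper's --- an application of the implicit function theorem at the point $(\ve,v)=(0,\mc{L}_\lam^{-1}\ora{h})$ --- but the functional-analytic setup is genuinely different. The paper first composes with $\mc{L}_\lam^{-1}$ and studies $G_\lam(\ve,v)=v-\mc{L}_\lam^{-1}\ora{h}-\ve\mc{L}_\lam^{-1}\mc{F}(v)$ as a map on $\R\times C[0,\pi]$ with the supremum norm; the payoff is that the partial derivative in $v$ at the base point is simply the identity $I$, so invertibility of the linearization is immediate, and the only operator-theoretic input is that $\mc{L}_\lam^{-1}$ is a bounded (integral) operator into $C$. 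You instead keep the equation in its raw form $G(v,\ve)=\mc{L}_\lam v-\ora{h}-\ve\mc{F}(v)$ on $H^2$, which forces two extra moves: you must replace the paper's stated topology on $H^2$ (the $L^2$-subspace topology, under which $\mc{L}_\lam$ is unbounded and the implicit function theorem is unavailable) by the full Sobolev norm --- a correction you rightly identify as essential --- and you must argue that the linearization $D_vG(v_0,0)=\mc{L}_\lam$ is a topological isomorphism, which follows from bijectivity plus the open mapping theorem. Both routes hinge on the same analytic lemma, namely that the Nemytskii map $v\mapsto f(\cdot,v)$ and the functionals $\eta_i$ are $C^1$ once one controls arguments in the supremum norm, and both deliver existence, $C^1$ dependence on $\ve$, and uniqueness. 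One shared caveat: in either version the implicit function theorem yields only \emph{local} uniqueness (uniqueness of solutions near $\mc{L}_\lam^{-1}\ora{h}$, or equivalently uniqueness of the solution curve), so the word ``unique'' in the statement must be read locally; this is not a defect of your argument relative to the paper's, since the paper's proof has exactly the same scope.
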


\begin{proof}
Our proof is an application of the implicit function theorem.   Under the assumption that $f$ is continuously differentiable in its second component, it is an easy exercise to show that $\mc{F}$ is continuously differentiable on $C[0,\pi]$ (when given the supremum norm).  The fact that $\lam\not\in\N^2$ allows us to define a function $G_\lam:\R\times C[0,\pi]\to C[0,\pi]$ by 
\bes
G_\lam(\ve, v)=v-\mc{L}_\lam^{-1}\ora{h}-\ve\mc{L}_\lam^{-1} \mc{F}(v).
\ees
Note that here we are using $\mc{L}_\lam$ to denote the restriction of $\mc{L}_\lam$ to $C^2[0,\pi]$. 

We take a moment to point out a few obvious facts. First, it is clear that $v\in C[0,\pi]$ solves \eqref{Eq:sturm}-\eqref{E:bc_2} if and only if $G_\lam(\ve, v)=0$.  Secondly, $G_\lam$ is continuously differentiable since $\mc{F}$ is.  Finally, $G_\lam(0, \mc{L}_\lam^{-1}\ora{h})=0$.

 Now, differentiating $G_\lam$ with respect to $v$, we see that 
\bes
\dfrac{\partial G_\lam}{\partial v}(0, \mc{L}_\lam^{-1}\ora{h})=I,
\ees
which is certainly a topological isomorphism. Under these conditions, by the implicit function theorem, there must exists a $\delta>0$ and a unique $C^1$ function $u:(-\delta, \delta)\subset\R \rightarrow C[0,\pi]$ such that $G(\epsilon, u(\epsilon)) = 0$ for each $\ve\in (-\delta, \delta)$. The proof is now complete.
\end{proof}

We finish the case of invertible $\mc{L}_\lam$ under an assumption of ``Lipschitzness''.
\begin{Thm}\label{Thm:Invertible_Lipschitz}
Suppose $f$ is Lipschitz with respect to its second component, that for each $i=1, 2$, $\eta_i$ is Lipschitz, and that $\lam\not\in\N^2$. Then there exists a $\delta> 0$ such that if $-\delta<\epsilon<\delta$, the boundary value problem \eqref{Eq:sturm}-\eqref{E:bc_2} has a unique solution. \end{Thm}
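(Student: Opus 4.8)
The plan is to replace the implicit function theorem used in the previous proof with the Banach fixed point theorem, since here we are only given Lipschitz continuity rather than differentiability. Exactly as before, the hypothesis $\lam\not\in\N^2$ guarantees that $\mc{L}_\lam$ is invertible, so a function $v\in C$ solves \eqref{Eq:sturm}-\eqref{E:bc_2} if and only if $v$ is a fixed point of the map $T_\ve:C\to C$ defined by
\bes
T_\ve(v)=\mc{L}_\lam^{-1}\ora{h}+\ve\,\mc{L}_\lam^{-1}\mc{F}(v).
\ees
Here I view $\mc{L}_\lam^{-1}$ as a bounded operator from $L^2\times\R\times\R$ into $(C,\norm{\cdot}_\infty)$; its boundedness into $C$ follows from the continuous Sobolev embedding $H^2\hookrightarrow C$. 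Since the range of $\mc{L}_\lam^{-1}$ lies in $H^2\subset C$, the map $T_\ve$ indeed sends $C$ into itself, and $C$ with the supremum norm is complete.

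The core of the argument is to show that $T_\ve$ is a contraction once $|\ve|$ is small. For $v_1,v_2\in C$,
\bes
\norm{T_\ve(v_1)-T_\ve(v_2)}_\infty=|\ve|\,\norm{\mc{L}_\lam^{-1}\big(\mc{F}(v_1)-\mc{F}(v_2)\big)}_\infty\le |\ve|\,M\,\norm{\mc{F}(v_1)-\mc{F}(v_2)}_{L^2\times\R\times\R},
\ees
where $M=\norm{\mc{L}_\lam^{-1}}$. I would then estimate the Lipschitz constant of $\mc{F}$ from $(C,\norm{\cdot}_\infty)$ into $L^2\times\R\times\R$: the Lipschitz hypothesis on $f$ in its second argument gives $\norm{f(\cdot,v_1)-f(\cdot,v_2)}_2\le K_f\norm{v_1-v_2}_2\le K_f\sqrt{\pi}\,\norm{v_1-v_2}_\infty$, using the embedding $\norm{\cdot}_2\le\sqrt{\pi}\,\norm{\cdot}_\infty$ on $[0,\pi]$, while the Lipschitz hypotheses on $\eta_1,\eta_2$ bound the two scalar components directly. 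Combining these produces a constant $K_{\mc{F}}$ with $\norm{\mc{F}(v_1)-\mc{F}(v_2)}_{L^2\times\R\times\R}\le K_{\mc{F}}\norm{v_1-v_2}_\infty$, whence
\bes
\norm{T_\ve(v_1)-T_\ve(v_2)}_\infty\le |\ve|\,M K_{\mc{F}}\,\norm{v_1-v_2}_\infty.
\ees

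To finish, I would choose $\delta>0$ so small that $\delta M K_{\mc{F}}<1$. Then for every $\ve$ with $-\delta<\ve<\delta$ the map $T_\ve$ is a contraction on the complete metric space $(C,\norm{\cdot}_\infty)$, and the contraction mapping principle yields a unique fixed point $v_\ve$, which is precisely the unique solution of \eqref{Eq:sturm}-\eqref{E:bc_2}.

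I expect the main subtlety to be the bookkeeping of norms across the two function spaces rather than any deep estimate. The operator $\mc{F}$ measures its first output in $L^2$, so the supremum-norm Lipschitz bound on $f$ must be passed through the embedding $\norm{\cdot}_2\le\sqrt{\pi}\,\norm{\cdot}_\infty$, while $\mc{L}_\lam^{-1}$ must be controlled as an operator \emph{into} $(C,\norm{\cdot}_\infty)$ rather than into $L^2$ or into $H^2$ with its graph norm. Both conversions rest on continuous embeddings, and keeping domains and codomains consistent throughout the chain of inequalities is where the care is needed; everything else is a routine contraction estimate.
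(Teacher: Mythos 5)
Your proposal is correct and follows essentially the same route as the paper: both reformulate \eqref{Eq:sturm}-\eqref{E:bc_2} as a fixed-point problem for $v\mapsto\mc{L}_\lam^{-1}\ora{h}+\ve\,\mc{L}_\lam^{-1}\mc{F}(v)$ on $(C,\norm{\cdot}_\infty)$ and conclude via the contraction mapping theorem once $|\ve|\,\norm{\mc{L}_\lam^{-1}}$ times the Lipschitz constant of $\mc{F}$ is less than $1$. If anything, your treatment is slightly more careful than the paper's, since you explicitly track that $\mc{F}$ lands in $L^2\times\R\times\R$ and that $\mc{L}_\lam^{-1}$ must be bounded into $C$ via the embeddings, whereas the paper leaves those norm identifications implicit.
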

\begin{proof}
For each $\ve\in \R$, define  $H_\lam:\R\times C[0,\pi]\to C[0,\pi]$ by 
\bes
H_\lam(\ve, v)=\mc{L}_\lam^{-1}\ora{h}+\ve \mc{L}_\lam^{-1}(\mc{F}(v)).
\ees 
For each fixed $\ve$, the solutions of \eqref{Eq:sturm}-\eqref{E:bc_2} are the fixed points of $H_\lam(\ve, \cdot)$. 
By assumption, $\mc{F}$ is Lipschitz, say with Lipschitz constant $\alpha\geq 0$. Note that since $\mc{L}_\lam$ is a differential operator, $\mc{L}_\lam^{-1}$ will be an integral operator, and is therefore linear and bounded. 
Thus, for a fixed $\ve$ and  and any $u, w\in C[a, b]$, 
\begin{align*}
    \norm{H_\lam(\epsilon, u) - H_\lam(\epsilon, w)}_\infty &= |\epsilon| \norm{\mc{L}_{\lam}^{-1}\mc{F}(u) - \mc{L}_{\lam}^{-1}\mc{F}(w)}_\infty\\
    &= |\epsilon|\norm{\mc{L}_{\lam}^{-1}(\mc{F}(u) - \mc{F}(w))}_\infty\\
    &\leq |\epsilon|\norm{\mc{L}_{\lam}^{-1}}\norm{\mc{F}(u) - \mc{F}(w)}\\
    &\leq |\epsilon|\norm{\mc{L}_{\lam}^{-1}}\cdot\alpha\norm{u-w}_\infty.
\end{align*}
If $0\leq|\epsilon|\norm{\mc{L}_{\lam}^{-1}}\alpha <1$, then $H_\lam(\ve, \cdot)$ is a contraction and from the contraction mapping theorem has exactly one fixed point. 
\end{proof}

%%%%%%%%%%%%%%%%%%%
%%%%%%%%%%%%%%%%%%
%%NonInvertible L%%%%%%%
\subsection{Noninvertible $\mc{L}_\lam$}

We now come to the main focus of this paper, that is, existence results for problem \eqref{Eq:sturm}-\eqref{E:bc_2} under the assumption that $\lam=n^2$. It is well known that under this assumption  $\mc{L}_{n^2}$ is not invertible and that $\ker(\mc{L}_{n^2}) =\spn\{\sin(nt)\}$. Define $\psi_{n}(t) = \sqrt{\frac{2}{\pi}}\sin(nt)$ so that $\psi_{n}$ is an $L^2$-normalized basis for $\ker(\mc{L}_{n^2})$.
Further, define $\varphi_{n}(t) = \dfrac{-\sqrt{\frac{\pi}{2}}\cos(nt)}{n}$. A simple calculation shows that $\la\psi_{n}, \varphi_{n}\ra _2 = 0$, where again $\la\cdot, \cdot\ra _2$ denotes the standard inner product on $L^2=L^2[0,\pi]$.  Moreover, we have that 

\begin{align*}
wr(\psi_{n}, \varphi_{n}) &= \begin{vmatrix} 
\sqrt{\frac{2}{\pi}}\sin(nt) &  \dfrac{-\sqrt{\frac{\pi}{2}}\cos(nt)}{n} \\
\sqrt{\frac{2}{\pi}} n\cos(nt) & \sqrt{\frac{\pi}{2}}\sin(nt)\\
\end{vmatrix}\\
&= 1.
\end{align*}
It follows from the theory of second-order linear differential equations that $\{\psi_{n},\varphi_{n}\}$ is a basis for the $\ker(\mc{A}_{n^2})$.

In cases where $\mc{L}_\lam$ is not invertible, the methods of the previous section clearly fail, since they depend crucially on the existence of $\mc{L}_\lam^{-1}$. Thus, proving the existence of solutions in the case when $\lam=n^2$ for some natural number $n$ will have to proceed via an alternate route.  Our approach in this direction will be to introduce a projection scheme (Lyapunov-Schmidt) which will allow us to reduce the problem of solving \eqref{Eq:sturm}-\eqref{E:bc_2} to solving a system of equivalent equations. The construction of our projection scheme will depend heavily on a  characterization of the image of $\mc{L}_\lam$, thus, we begin by characterizing $\im(\mc{L}_{n^2})$ for any natural number $n$. Our characterization is in the spirit of results found in \cite{Maroncelli2014}. 

We start by defining 
\[ \omega_n(t,s) = \begin{cases} 
      \psi_{n}(t)\varphi_{n}(s)& 0\leq t\leq s \leq 1 \\
      \psi_{n}(s)\varphi_{n}(t) & 0\leq s\leq t\leq 1 \\
   \end{cases},
\]
that is,
\[ \omega_n(t,s) = \begin{cases} 
     \dfrac{-\sin(nt)\cos(n s)}{n} &0\leq t\leq s \leq 1 \\\\
     \dfrac{-\sin(n  s)\cos(n   t)}{n } & 0\leq s\leq t\leq 1 \\
   \end{cases}.
\]
Now define $K_n:L^2\to H^2$ by 
\bes\label{E:Green}
K_nh(t)=\int_0^1\omega_n(t,s)h(s)ds.
\ees
From standard results in analysis, $K_n$ is compact, self-adjoint, and satisfies $\mc{A}_{n^2}K_nh=h$ for every $h\in L^2$.  By direct calculation, one easily establishes that for every $h\in L^2$, $B_1K_nh=\la h,\varphi_{n}\ra_2B_1\psi_{n}=0$ and  $B_2K_nh=\langle h,\psi_{n}\rangle_2B_2\varphi_{n}$.
Let 
\bes
v_{1,n}=B_1\varphi_{n}=-\sqrt{\dfrac{\pi}{2n^2}} \text{ and } v_{2,n}=B_2 \varphi_{n}=(-1)^{n+1}\sqrt{\dfrac{\pi}{2n^2}}.
\ees

We are now in a  position to characterize $\im(\mc{L}_{n^2})$.
To this end, define an inner product on $L^2[0,\pi]\times \R^2$ by 
\be\label{E:inner_product}\la \begin{bmatrix} 
h \\
w_1\\
w_2
\end{bmatrix},  \begin{bmatrix} 
g\\
y_1\\
y_2
\end{bmatrix}\ra  = \frac{\pi}{\pi+4n^2}\bigg{(} \la h,g\ra _2 + \la \begin{bmatrix} 
w_1\\
w_2
\end{bmatrix},  \begin{bmatrix} 
y_1 \\
y_2
\end{bmatrix}\ra _{\R}\bigg{)}.
\ee
Further, define
\bes
\ora{\psi_{n}} = \begin{bmatrix} 
\psi_n \\
v_{1,n}^{-1}\\
-v_{2,n}^{-1}
\end{bmatrix}
\ees 
and for $h\in L^2$, $h_1, h_2 \in \R$, define
\bes
\ora{h} =  \begin{bmatrix} 
h\\
h_1\\
h_2
\end{bmatrix}.
\ees
Note that $\norm{\ora{\psi_{n}}} = 1$ for the norm generated by the inner product on $L^2\times \R^2$. It can be shown, see  \cite{Maroncelli2014}, that $\ora{h}$ is in the image of $\mc{L}_{n^2}$ iff $\la\ora{h}, \ora{\psi_{n}}\ra  = 0$. Since

\begin{align*}
\la\ora{h}, \ora{\psi_{n}}\ra  &= \frac{\pi}{\pi+4n^2}\bigg{(}\la\sqrt{\frac{2}{\pi}}\sin(nt), h(t)\ra _2 + \la \begin{bmatrix} 
-\sqrt{\frac{2n^2}{\pi}}
\\
(-1)^n\sqrt{\frac{2n^2}{\pi}}
\end{bmatrix},  \begin{bmatrix} 
h_1 \\
h_2
\end{bmatrix}\ra _{\R} \bigg{)} \\
&=\frac{\pi}{\pi+4n^2}\bigg{(}\int_0^{\pi} \sqrt{\frac{2}{\pi}}\sin(nt)h(t)dt+\sqrt{\frac{2n^2}{\pi}}( (-1)^nh_2-h_1)\bigg{)},\\
\end{align*}
 we see that $\ora{h}$ is in the image of $\mc{L}_{n^2}$ if and only if
 \be\label{E:ImLCh}
 \ds n(h_1+(-1)^{n+1}h_2) = \int_0^{\pi} \sin(nt)h(t)dt.
 \ee  Note that if $h_1=0=h_2$, we get the very famous result that $h\in L^2$ is in the image of $\mc{A}_{n^2}$ (with Dirichlet boundary condtions) if and only if $\ds\int_0^\pi \sin(nt)h(t)dt=0$.

We now introduce projection operators for our Lyapunov-Schmidt projection scheme.  Since $\ker(\mc{L}_{n^2})$ is spanned by $\psi_{n}$, the orthogonal projection operator $P_n:L^2\to L^2$ onto the kernel of $\mc{L}_{n^2}$ is given by 
\bes
P_n(x) = \la x, \psi_{n}\ra _2\psi_{n}.
\ees
Also, we define $Q_n:L^2\times \R^2\to L^2\times \R^2$ by \bes Q_n(\ora{h}) = \la\ora{h}, \ora{\psi_{n}}\ra \ora{\psi_{n}}.\ees
From our analysis above, $Q_n$ is a projection onto $\im(\mc{L}_{n^2})^{\bot}$, and so $I-Q_n$ is a projection onto $\im(\mc{L}_{n^2})$. 

 We will proceed by applying the Lyapunov-Schmidt projection scheme, as outlined in \cite{Maroncelli2013, Maroncelli2014}. Since  $I- Q_n$ is the orthogonal projection onto the image of $\mc{L}_{n^2}$ and $Q_n$ is a projection onto $\im(\mc{L}_{n^2})^{\bot}$, we get 
\bes \mc{L}_{n^2}v = \ora{h} +\epsilon \mc{F}(v)\ees
if and only if 
\bes\left\{ \begin{array}{c} 
    (I-Q_n)\mc{L}_{n^2}v= (I-Q_n)\ora{h} + (I-Q_n)\epsilon \mc{F}(v)\\
    \text{and}\\
     Q_n\mc{L}_{n^2}v = Q_n\ora{h} + Q_n\epsilon \mc{F}(v)\\
   \end{array}\right..
\ees
Since $\mc{L}_{n^2}u \in \im(\mc{L}_{n^2})$, $(I-Q_n)\mc{L}_{n^2}u = \mc{L}_{n^2}u$. Likewise, for $\ora{h} \in \im(\mc{L}_{n^2})$, $(I-Q_n)\ora{h} = \ora{h}$, and so solving \eqref{Eq:sturm}-\eqref{E:bc_2} is equivalent to solving 
\bes
\left\{\begin{array}{c}
    \mc{L}_{n^2}v = \ora{h} + \epsilon (I-Q_n)\mc{F}(v)\\
    \text{and}\\
     0 =  \epsilon Q_n\mc{F}(v)\\
   \end{array}\right..
\ees
Let $\mc{M}_n$ denote the generalized inverse of $\mc{L}_{n^2}$ such that $\mc{M}_{n}:\im(\mc{L}_{n^2})\rightarrow \ker(\mc{L}_{n^2})^{\bot}$. By applying $\mc{M}_n$ to the first equation of our system, we get
\[ \Leftrightarrow \begin{cases} 
   \mc{M}_n \mc{L}_{n^2}v &= \mc{M}_n\ora{h} + \epsilon \mc{M}_n(I-Q_n)\mc{F}(v)\\
       \epsilon Q_n\mc{F}(v) &= 0\\
   \end{cases}.
\]
Recalling that $P_n$ was the projection onto $\ker(\mc{L}_{n^2})$, we see $\mc{M}_n\mc{L}_{n^2} = I-P_n$. Thus, our system is equivalent to
\[\begin{cases} 
   (I-P_n)v &= \mc{M}_n\ora{h} + \epsilon \mc{M}_n(I-Q_n)\mc{F}(v)\\
     \epsilon Q_n\mc{F}(v) &= 0\\
   \end{cases}.
\]
We are interested in finding solutions to \eqref{Eq:sturm}-\eqref{E:bc_2} for $\epsilon \neq 0$, so we will assume that \bes \epsilon Q_n\mc{F}(u) = 0 \Leftrightarrow Q_n\mc{F}(u) = 0.\ees
Thus, the system becomes
\[ \Leftrightarrow \begin{cases} 
   (I-P_n)u &= \mc{M}_n\ora{h} + \epsilon \mc{M}_n(I-Q_n)\mc{F}(u)\\
     Q_n\mc{F}(u) &= 0\\
   \end{cases}.
\]
Since we are assuming $h\in C[0,\pi]$, we can define $G_n:\R\times C[0,\pi]\to\im(I-P_n)\times \im(Q_n)$ by 
\[ G_n(\epsilon, u) =  \begin{cases} 
   (I-P_n)u - \mc{M}_n\ora{h} - \epsilon \mc{M}(I-Q_n)\mc{F}(u)\\
     Q_n\mc{F}(u) \\
   \end{cases}.
\]
Solving \eqref{Eq:sturm}-\eqref{E:bc_2} is now equivalent to 
\bes
G_n(\ve, u)=  \begin{bmatrix} 
   (I-P_n)u - \mc{M}_n\ora{h} - \epsilon \mc{M}_n(I-Q_n)\mc{F}(u)\\
     Q_n\mc{F}(u) \\
   \end{bmatrix}=0.
\ees

Our goal is to apply the implicit function theorem to prove the existence of solutions to \eqref{Eq:sturm}-\eqref{E:bc_2}.  For reference in what is to follow, we point out here that $G_n$ is continuously differentiable (relative to the supremum norm) with\
\bes
D_2G(\ve,u)=\frac{\partial G_n}{\partial u}(\epsilon, u)  = \begin{bmatrix} 
   (I-P_n) - \epsilon \mc{M}_n(I-Q_n)D\mc{F}(u)\\
     Q_nD\mc{F}(u)\\
   \end{bmatrix},
   \ees
   where
 \be\label{Der}
 D\mc{F}(u)=\begin{bmatrix}
f_2(\cdot, u) \\
\eta_1'(u)\\
\eta_2'(u)\\
\end{bmatrix}.
\ee
For a proof along these lines, see \cite{Maroncelli2013}.

We now come to our main existence theorem in the case of noninvertible $\mc{L}_\lam$.  We prove the existence of solutions to problem \eqref{Eq:sturm}-\eqref{E:bc_2} under mild assumptions on the solutions to the nonhomogeneous problem \eqref{E:NHSturm}-\eqref{E:NHSBoundary}.
\begin{Thm}\label{T:mainresonance} Suppose $\lam=n^2$ for some $n\in \N$ and that $\bar{u}$ is a particular solution to the nonhomogeneous equation \eqref{E:NHSturm}-\eqref{E:NHSBoundary}.
% then $\frac{\partial G}{\partial u}(0, \bar{u})$ is one-to-one and onto, and thus invertible 
If \be \label{E:Imcond}n\bigg{[}\eta_1(\bar{u})+(-1)^{n+1}\eta_2(\bar{u})\bigg{]} = \int_0^{\pi} f(s,\bar{u}(s))\sin(ns)ds\ee and
\be\label{E:Dercond} n\bigg{[}\eta_1'(\bar{u})(\sin(n\cdot))+(-1)^{n+1}\eta_2'(\bar{u})(\sin(n\cdot))\bigg{]} \neq \int_0^{\pi} f_2(s,\bar{u}(s))\sin^2(n s)ds,\ee
then \eqref{Eq:sturm} subject to boundary conditions  \eqref{E:bc_1} and \eqref{E:bc_2} has at least one solution for small $\epsilon$.
\end{Thm}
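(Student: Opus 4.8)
The plan is to apply the implicit function theorem to the map $G_n$ at the base point $(\ve, u) = (0, \bar{u})$, just as in the invertible case, but now using the reduced system produced by the Lyapunov--Schmidt scheme. Recall that $G_n$ has already been shown to be continuously differentiable with partial derivative $D_2G_n$ given above. Two things must be checked: that $(0,\bar{u})$ is a zero of $G_n$, and that the partial derivative $D_2G_n(0,\bar{u})$ is a topological isomorphism. My claim is that conditions \eqref{E:Imcond} and \eqref{E:Dercond} are precisely what guarantee these two facts, respectively.

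First I would verify that $G_n(0,\bar{u}) = 0$. Since $\bar{u}$ solves $\mc{L}_{n^2}\bar{u} = \ora{h}$, we have $\mc{M}_n\ora{h} = \mc{M}_n\mc{L}_{n^2}\bar{u} = (I-P_n)\bar{u}$, so the function-valued first component of $G_n(0,\bar{u})$ vanishes identically. For the scalar second component, $Q_n\mc{F}(\bar{u}) = \la\mc{F}(\bar{u}),\ora{\psi_n}\ra\,\ora{\psi_n}$, and expanding the inner product \eqref{E:inner_product} with $v_{1,n}^{-1} = -\sqrt{2n^2/\pi}$ and $-v_{2,n}^{-1} = (-1)^n\sqrt{2n^2/\pi}$ shows that $\la\mc{F}(\bar{u}),\ora{\psi_n}\ra = 0$ is equivalent to \eqref{E:Imcond}. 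Hence $(0,\bar{u})$ is a zero of $G_n$.

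Next I would analyze $D_2G_n(0,\bar{u})$, which from \eqref{Der} is the map $u\mapsto \big[(I-P_n)u,\ Q_nD\mc{F}(\bar{u})u\big]$. Decomposing $C[0,\pi] = \spn\{\psi_n\}\oplus\im(I-P_n)$ and writing $u = \alpha\psi_n + w$, the first component returns $w$, while the second returns $\alpha\la D\mc{F}(\bar{u})\psi_n,\ora{\psi_n}\ra + \la D\mc{F}(\bar{u})w,\ora{\psi_n}\ra$. This operator is block-triangular, and it is a bounded bijection -- hence an isomorphism -- exactly when the scalar $\la D\mc{F}(\bar{u})\psi_n,\ora{\psi_n}\ra$ is nonzero: given a target, one recovers $w$ from the first equation and then solves uniquely for $\alpha$. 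Computing this inner product via \eqref{E:inner_product}, using $D\mc{F}(\bar{u})\psi_n = \big[f_2(\cdot,\bar{u})\psi_n,\ \eta_1'(\bar{u})\psi_n,\ \eta_2'(\bar{u})\psi_n\big]^{\top}$ together with the linearity of each $\eta_i'(\bar{u})$, shows the nonvanishing requirement is precisely \eqref{E:Dercond}.

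With both hypotheses in hand, the implicit function theorem produces a $\delta>0$ and a $C^1$ curve $\ve\mapsto u_\ve$ on $(-\delta,\delta)$ with $u_0 = \bar{u}$ and $G_n(\ve,u_\ve) = 0$; each $u_\ve$ is then a solution of \eqref{Eq:sturm}--\eqref{E:bc_2}, which gives the conclusion (in fact a smooth branch, not merely existence). I expect the main obstacle to be the isomorphism step: unlike the invertible case, $D_2G_n(0,\bar{u})$ annihilates the kernel direction $\psi_n$ in its first slot, so invertibility cannot simply be read off. The key point is that the kernel coefficient $\alpha$ is controlled \emph{solely} by the scalar equation, which is exactly why the single nondegeneracy condition \eqref{E:Dercond} suffices; some care is also needed to confirm that the resulting inverse is bounded on the chosen Banach spaces.
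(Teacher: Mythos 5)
Your proposal is correct and follows essentially the same route as the paper: the paper also applies the implicit function theorem to $G_n$ at $(0,\bar{u})$, reduces invertibility of $D_2G_n(0,\bar{u})$ to the nonvanishing of the scalar $\la D\mc{F}(\bar{u})(\psi_{n}),\ora{\psi_{n}}\ra$ (which is condition \eqref{E:Dercond}), and its surjectivity step constructs exactly your coefficient $\alpha$ as the constant $c$. Your block-triangular packaging merely unifies the paper's separate injectivity and surjectivity arguments, and your explicit verification that \eqref{E:Imcond} gives $G_n(0,\bar{u})=0$ fills in a detail the paper only asserts.
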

\begin{proof}
First, note that under the assumptions on $\bar{u}$, we have $G_n(0,\bar{u})=0$. If we can show that $D_2G_n$ is invertible as a continuous linear map, then the result will follow from the implicit function theorem. As a consequence of the open mapping theorem,  $D_2G_n$ is a topological isomorphism if and only if it is a bijection. We start by showing $D_2G_n$ is injective. 

Suppose for the moment that 

\bes
D_2G_n(0,\bar{u})(w)= \begin{bmatrix}
   (I-P_n)(w)\\
     Q_nD\mc{F}(\bar{u})(w)\\
   \end{bmatrix}=0.
\ees
 Since $w=P_n(w)$, we conclude that $Q_nD\mc{F}(\bar{u})(P_n(w))=0$.  Using \eqref{Der} we conclude
 \bes
\frac{2}{\pi}\la w,\sin(n\cdot)\ra _2Q_n\begin{bmatrix}
f_2(\cdot,\bar{u})\sin(n\cdot) \\
\eta_1'(\bar{u})(\sin(n\cdot))\\
\eta_2'(\bar{u})(\sin(n\cdot))\\
\end{bmatrix}\\\\
=0
\ees
or
\bes
\frac{2}{\pi}\la w,\sin(n\cdot)\ra_2\cdot\la\begin{bmatrix}
f_2(\cdot,\bar{u})\sin(n\cdot) \\
\eta_1'(\bar{u})(\sin(n\cdot))\\
\eta_2'(\bar{u})(\sin(n\cdot))\\
\end{bmatrix}, 
\begin{bmatrix}
\sqrt{\frac{2}{\pi}}\sin(n\cdot) \\
-\sqrt{\frac{2n^2}{\pi}}\\
(-1)^n\sqrt{\frac{2n^2}{\pi}}
\end{bmatrix}\ra\begin{bmatrix}
\sqrt{\frac{2}{\pi}}\sin(n\cdot) \\
-\sqrt{\frac{2n^2}{\pi}}\\
(-1)^n\sqrt{\frac{2n^2}{\pi}}
\end{bmatrix}=0.
\ees
Using the definition of $\la\cdot, \cdot\ra$, we deduce
\bes
\lt\frac{2}{\pi}\rt^{3/2}\cdot\la w,\sin(n\cdot)\ra_2\cdot\bigg{(}\int_0^{\pi} f_2(s,\bar{u}(s))\sin^2(n s)ds-n\eta_1'(\bar{u})(\sin(n\cdot))+(-1)^nn\eta_2'(\bar{u})(\sin(n\cdot))\bigg)=0.
\ees
\
Since we are assuming 
\bes n\bigg{[}\eta_1'(\bar{u})(\sin(n\cdot))+(-1)^{n+1}\eta_2'(\bar{u})(\sin(n\cdot))\bigg{]} \neq \int_0^{\pi} f_2(s,\bar{u}(s))\sin^2(n s)ds,\ees
we conclude that $\la w,\sin(n\cdot)\ra_2=0$, which is only the case when $w=0$ (since $P_n(w)=w$). It follows that $D_2G_n(0,\bar{u})$ is injective.

We finish the proof by showing that $D_2G_n(0,\bar{u})$ is onto. To this end, let $q$ be an arbitrary element in $\im(I-P_n)$, $p$ an element of $\im(Q_n)$ and define $c\in \R$ by
\bes c =  \frac{\la p,\ora{\psi_n}\ra - \la D\mc{F}(\bar{u})(q), \ora{\psi_{n}}\ra}{\la D\mc{F}(\bar{u})(\psi_{n}),  \ora{\psi_{n}}\ra },\ees
for $\bar{u}$ the  particular solution to the nonhomogeneous linear equation. Note that this is well defined since in the proof that $D_2G_n(0,\bar{u})$ is injective, we showed

\bes
\la D\mc{F}(\bar{u})(\psi_{n}),  \ora{\psi_{n}}\ra =\frac{2}{\pi}\bigg{(}\int_0^{\pi} f_2(s,\bar{u}(s))\sin^2(n s)ds-n\eta_1'(\bar{u})(\sin(n\cdot))+(-1)^nn\eta_2'(\bar{u})(\sin(n\cdot))\bigg),
\ees
which is nonzero by the assumption on $\bar{u}$.

Consider, $r = c\psi_{n} + q$. Then
\bes
D_2G_n(0, \bar{u})(r) = \begin{bmatrix} 
   (I-P_n)(r)\\
     Q_nD\mc{F}(\bar{u})(r)\\
   \end{bmatrix}.
\ees 
Since $\psi_{n}$ forms a basis for $\im(P_n)$, we deduce
\bes
D_2G_n(0, \bar{u})(r)  = \begin{bmatrix} 
  q\\
     Q_nD\mc{F}(\bar{u})(r)\
  \end{bmatrix}.
\ees
Recalling that 
\bes
    c= \frac{\la p,\ora{\psi_n}\ra  - \la D\mc{F}(\bar{u})(q), \ora{\psi_{n}}\ra }{\la D\mc{F}(\bar{u})(\psi_{n}),  \ora{\psi_{n}}\ra },
\ees
we get that 
\bes
\begin{split}
  Q_nD\mc{F}(\bar{u})(r)& = \frac{\la p,\ora{\psi_n}\ra  - \la D\mc{F}(\bar{u})q, \ora{\psi_{n}}\ra }{\la D\mc{F}(\bar{u})\psi_{n},  \ora{\psi_{n}}\ra }\cdot\la D\mc{F}(\bar{u})\psi_{n}, \ora{\psi_{n}}\ra +\la D\mc{F}(\bar{u})q, \ora{\psi_{n}}\ra )\ora{\psi_{n}}\\
    &=(\la p,\ora{\psi_n}\ra - \la D\mc{F}(\bar{u})q, \ora{\psi_{n}}\ra +\la D\mc{F}(\bar{u})q,  \ora{\psi_{n}}\ra )\ora{\psi_{n}}\\
    &= \la p,\ora{\psi_n}\ra \ora{\psi_{n}}\\
    &=p.
    \end{split}
\ees
It follows that $D_2G_n(0,\bar{u})$ is onto and therefore invertible. By the open mapping theorem, $D_2G_n$ has a continuous inverse and the result now follows from the implicit function theorem.
\end{proof}

\section{An Example}

In this section we give a concrete example of the application of our main result, Theorem \ref{T:mainresonance}.

Consider
 \begin{equation}\label{E:ODEEx}
v''+n^2v=\ve f(v(\cdot))
  \end{equation}
 subject to 
 \begin{equation}\label{E:BCEx}
v(0)= 1+\ve g_1(v(t_1))\text{ and }v(\pi)=(-1)^n+\ve g_2(v(t_2)),
\ee
where $n\in \N$, $g_1$, $g_2$ are real-valued differentiable functions, and  $t_1$, $t_2\in [0,\pi]$.

In this specific case, solutions to the associated linear nonhomogeneous problem are given by 
\bes
 \bar{u}(x)=c\sin(nx)+\cos(nx).
\ees
A simple calculation shows that  \eqref{E:Imcond} and \eqref{E:Dercond} become
\be\label{ExIm}
n[g_1(\bar{u}(t_1))+(-1)^{n+1}g_2(\bar{u}(t_2))]=\int_0^\pi f(\bar{u}(s))\sin(ns)ds
\ee
and
\be\label{ExDer}
n[g'_1(\bar{u}(t_1))\sin(n\cdot t_1)+(-1)^{n+1}g_2'(\bar{u}(t_2))\sin(n\cdot t_2)]\neq\int_0^\pi f'(\bar{u}(s))\sin^2(ns)ds,
\ee
respectively.

If we take $f(x)=x^2$, then \eqref{ExIm} and \eqref{ExDer} become
\be\label{ExIm2}
n[g_1(\bar{u}(t_1))+(-1)^{n+1}g_2(\bar{u}(t_2))]=\dfrac{(2c^2+1)(1+(-1)^{n+1})}{3n}
\ee
and
{\small
\be\label{ExDer2}
n[g'_1(\bar{u}(t_1))\sin(n\cdot t_1)+(-1)^{n+1}g_2'(\bar{u}(t_2))\sin(n\cdot t_2)]\neq\dfrac{4c(1+(-1)^{n+1})}{3n},
\ee
}
respectively.
It is clear that there are an abundance of functions $g_1, g_2$ and points $t_1, t_2\in [0,\pi]$ such that \eqref{ExIm2} and \eqref{ExDer2} are satisfied. 

For a concrete example, take $g_1(x)=x^m$ for some $m\in \N, m>2,$ and assume $g_2(x)=(-1)^{n}K$, where $0<K$. In this case, \eqref{ExIm2} and \eqref{ExDer2} become
\be\label{ExIm3}
(c\sin(n\cdot t_1)+\cos(n\cdot t_1))^m-K=\dfrac{(2c^2+1)(1+(-1)^{n+1})}{3n^2}
\ee
and

\be\label{ExDer3}
m(c\sin(n\cdot t_1)+\cos(n\cdot t_1))^{m-1}\sin(n\cdot t_1)\neq \dfrac{4c(1+(-1)^{n+1})}{3n^2},
\ee
respectively.

Suppose now that $t_1\neq \dfrac{k\pi}{n}$ for each $k\in\{1, \cdots, n\}$.  It can be shown that there exists a $\bar{c}\in \R$ such that 
\be\label{ExIm4}
(\bar{c}\sin(n\cdot t_1)+\cos(n\cdot t_1))^m-K=\dfrac{(2\bar{c}^2+1)(1+(-1)^{n+1})}{3n^2}.
\ee
To see this, define $j:\R\to \R$ by
\bes
j(w)=(w\sin(n\cdot t_1)+\cos(n\cdot t_1))^m-K-\dfrac{(2w^2+1)(1+(-1)^{n+1})}{3n^2}.
\ees
 If we take $w=-\cot(n\cdot t_1)$, then  
\bes
j(w)=-\left(K+\dfrac{(2(-\cot(n\cdot t_1))^2+1)(1+(-1)^{n+1})}{3n^2}\right)<0;
\ees
depending on whether $m$  is odd or even, and also the sign of $\sin(n\cdot t_1)$, we have $\lim_{w\to -\infty}j(w)=\infty$  or $\lim_{w\to \infty}j(w)=\infty$.  The existence of a $\bar{c}$ such that \eqref{ExIm4} holds now follows from intermediate value theorem. 

If we now assume that $n\in 2\N$, then 
\eqref{ExDer3} becomes
\bes
m(c\sin(n\cdot t_1)+\cos(n\cdot t_1))^{m-1}\sin(n\cdot t_1)\neq 0.
\ees
However, $m(\bar{c}\sin(n\cdot t_1)+\cos(n\cdot t_1))^{m-1}\sin(n\cdot t_1)=0$  if and only if $\sin(n\cdot t_1)=0$, which is not the case, since $t_1\neq \dfrac{k\pi}{n}$ for each $k\in\{1, \cdots, n\}$. Thus, we conclude from Theorem \ref{T:mainresonance} the existence of solutions to 
 \begin{equation}\label{E:ODEEx}
v''+n^2v=\ve v^2
  \end{equation}
 subject to 
 \begin{equation}\label{E:BCEx}
v(0)= 1+\ve v^m(t_1)\text{ and } v(\pi)=1+\ve K
\ee
for ``small'' $\ve$ (small depends on $m, n$, $K$, and $t_1$), for all $m\in \N$ with $m>2$, any $n\in 2\N$,  every $K>0$, and any $t_1\neq \dfrac{k\pi}{n}$ for each $k\in\{1, \cdots, n\}$.
% 
%Now, it is not hard to show, using basic calculus ideas, that 
%\bes
%\sup_{t\in[0,\pi]} (c\sin(n\cdot t)+\cos(n\cdot t))^m=(1+c^2)^{\frac{m}{2}}
%\ees
%and
%\bes
%\inf_{t\in[0,\pi]} (c\sin(n\cdot t)+\cos(n\cdot t))^m=\begin{cases}-(1+c^2)^{\frac{m}{2}}& m \text{ odd}\\ 0 & m \text{ even}\end{cases}.
%\ees
%Since $m, n\geq$ we have that $\dfrac{\pi c^2+\pi}{2n}<(1+c^2)^{\frac{m}{2}}$, so by the intermediate value theorem 

%\tan^{-1}\left(\frac{1}{c}\right)\right)$, say $s_1$ and $s_2$, such that for $i=1, 2$
%\bes
%(c\sin(n\cdot s_i)+\cos(n\cdot s_i))^m=\dfrac{\pi c^2+\pi}{2n}
%\ees
%and
%\bes
%c\sin(n\cdot s_1)+\cos(n\cdot s_1)=c\sin(n\cdot s_2)+\cos(n\cdot s_2).
%\ees
%But then $\sin(n\cdot s_1)\neq \sin(n\cdot s_2)$ for if not,  then $\cos(n\cdot s_1)=\cos(n\cdot s_2)$, which is impossible since $\cos(n\cdot)$ is one-to-one on $(0, \frac{\pi}{n})$.  If follows that 
%\bes
%mn((c\sin(n\cdot s_i)+\cos(n\cdot s_i))^{m-1}\sin(n\cdot s_i))\neq \dfrac{2c((-1)^{n}+3\cdot(-1)^{n+1}+2)}{3n^2}
%\ees
%for at least one $i=1,2$.  If we choose $t_1$ to be this $s_i$, then \eqref{E:Imcond} and \eqref{E:Dercond} hold in this case and Theorem \ref{T:mainresonance} gives the existence of solutions to \eqref{E:ODEEx} with boundary conditions \eqref{E:BCEx} for $\ve$ in some interval $(-\delta, \delta)$. Similar arguments can be made in 

%{\bibliography{references}}{}
%\bibliographystyle{plain}

 \end{document}